\theoremstyle{plain}
\newtheorem{thm}{Theorem}[section]
\theoremstyle{definition}
\newtheorem*{Ack}{Acknowledgement}
\theoremstyle{remark}
\newtheorem{note}[thm]{}
\def\grRp{{\operatorname{gr}}{R^+}}
\def\grR{{\operatorname{gr}}{R}}
\def\ha{\widehat{a}}
\def\hb{\widehat{b}}
\def\Z{\mathbb{Z}}
\begin{document}

\title[Characterizations of finite dimensionality]{On graded characterizations of 
  finite \\ dimensionality for algebraic algebras}

\author{Edward S. Letzter}

\address{Department of Mathematics\\
        Temple University\\
        Philadelphia, PA 19122-6094}
      
      \email{letzter@temple.edu }

\date{\today}

\keywords{Algebraic algebra, associated graded algebra, graded-nil subring.}

\subjclass[2010]{Primary: 16U99, 16W70. Secondary: 16N40.}

\begin{abstract} We observe that a finitely generated algebraic
  algebra $R$ (over a field) is finite dimensional if and only if the
  associated graded ring $\grR$ is right noetherian, if and only if
  $\grR$ has right Krull dimension, if and only if $\grR$ satisfies a
  polynomial identity.
\end{abstract}

\maketitle


\section{Introduction}

Examples of infinite dimensional, finitely generated, algebraic
algebras (over fields) were first produced by Golod and Shafarevich in
1964 \cite{Gol-Saf}, providing a negative answer to the longstanding
and famous Kurosh Problem. Since the early 2000s there has been
increased interest -- and several new significant results -- in the
study of Kurosh-type and related problems for associative algebras;
see, e.g., \cite{Bel-Sma-Smo}, \cite{Smo2}, and \cite{Zel} for an
introduction and overview. A thumbnail sketch of relevant developments
could include: Smoktunowicz's 2002 construction of a simple nil ring
over an arbitrary countable field \cite{Smo1}; Bell and Small's 2002
construction of a finitely generated primitive algebraic algebra over
an arbitrary field \cite{Bel-Sma}; Lenagan and Smoktunowicz's 2007
construction of an infinite dimensional nil algebra, over an arbitrary
countable field, of finite Gelfand-Kirillov (GK-) dimension
\cite{Len-Smo}; Lenagan, Smoktunowicz, and Young's 2012 construction
of an infinite dimensional nil algebra, over an arbitrary countable
field, of GK-dimension at most three \cite{Len-Smo-You}; and Bell,
Small, and Smoktunowicz's construction of an infinite dimensional
primitive algebraic algebra, over an arbitrary countable field, of
GK-dimension at most six \cite{Bel-Sma-Smo}. 

Included among prior results specifically focused on associated graded
rings are Smoktunowicz's 2010 example of a finitely generated algebaic
algebra, over an arbitrary countable field, for which the associated
graded ideal spanned by homogeneous elements of positive degree is not
nil \cite{Smo3}, and Regev's 2010 theorem, for finitely generated
algebraic algebras over an uncountable field, that the associated
graded ideal spanned by homogeneous elements of positive degree must
be nil \cite{Reg}. In both cases, these ideals are graded nil. See
(\ref{Reg-Smo}) below.

Our main result, presented in somewhat more precise form in
(\ref{main-result}), asserts that a finitely generated algebraic
algebra $R$ over a field is finite dimensional if and only if the
associated graded ring $\grR$ is right noetherian, if and only if
$\grR$ has right Krull dimension, if and only if $\grR$ satisfies a
polynomial identity.

Numerous naturally arising examples of infinite dimensional, finitely
generated algebras whose associated graded rings are noetherian can be
found in \cite{Bro-Goo} and \cite{McC-Rob}.  In-depth treatments of
graded and filtered rings can be found in \cite{McC-Rob} and
\cite{Nas-VOy}. 

\begin{Ack}
I am grateful to Lance Small for helpful comments.
\end{Ack}

\section{Graded Characterizations of Finite Dimensionality} 

\begin{note} \label{setup} Let $k$ be a field, and let $R$ be a $k$-algebra
  generated by a finite dimensional $k$-subspace $V$ of $R$ with
  $1 \in V$.  The following standard setup will remain in effect
  throughout this note:

(i) Letting $V^{-1} = 0$ and $V^0 = k$, set $V^m$ equal to the
$k$-vector space spanned by products over $V$ of length $m$.

(ii) For $a \in R$, set $\vert a\vert$ equal to the minimum integer
$m$ such that $a \in V^m$, and set 
\[ \ha = a + V^m \in V^m/V^{m-1}.\]
Multiplication in the associated ($\Z$-)graded ring
\[ \grR = \bigoplus _{i=0}^\infty V^i/V^{i-1}\]
is determined via
\[ \ha \cdot \hb = ab + V^{m+n-1} \in V^{m+n}/V^{m+n-1}, \]
for $\vert a \vert = m$ and $\vert b \vert = n$. 

(iii) Set 
\[ \grRp = \bigoplus ^\infty _{i =1} V^i/V^{i-1} ,\]
a graded ideal of $\grR$ that is $k$-linearly spanned by the
homogeneous elements $\ha$, for $a \in R$, $|a| \geq 1$. It is easy to
check that $R$ is finite dimensional if and only if $\grR$ is finite
dimensional, if and only if $\grRp$ is nilpotent.

(iv) A subset $S$ of $R$ is \emph{nil} if each element of $S$ is
nilpotent. A graded subring of a group graded ring is \emph{graded
  nil} if each homogeneous element is nilpotent.
\end{note}

\begin{note} \label{Reg-Smo} Assuming $R$ is algebraic over $k$,
  Smoktunowicz proved that $\grRp$ need not be nil if $k$ is countable
  \cite{Smo3}, and A.~Regev proved that $\grRp$ must be nil if $k$ is
  uncountable \cite{Reg}. However, for arbitrary choices of $k$, it
  can easily be checked that $\grRp$ is graded nil if $R$ is algebraic
  over $k$.
\end{note}

\begin{note} \label{survey}
  Next, we give a brief survey of Jacobson's theorem \cite{Jac} (and some of its
  applications) concerning the nilpotence of certain subrings of
  artinian rings.

  (i) A subset $B$ of a ring $A$ is \emph{weakly closed} if for each
  pair of elements $a, b \in B$ there exists an element $\gamma(a,b)$
  in the center of $A$ such that $ab + \gamma(a,b) ba \in B$. In
  \cite{Jac} it is proved that if $A$ is artinian then subrings of $A$
  generated by nil weakly closed subsets are nilpotent. The earlier
  theorem of Levitski states that a nil one-sided ideal of a right
  noetherian ring is nilpotent.

  (ii) Goldie's Theorem can be employed to obtain the following
  corollary, also due to Goldie \cite[Theorem 6.1]{Gol}: If $A$ is a
  right noetherian ring, then a nil weakly closed subset of $A$
  generates a nilpotent subring. In fact, the proof of this last
  result more generally shows: Suppose that $A$ is a ring, that the
  prime radical $J$ of $A$ is nilpotent, and that $A/J$ embeds in a
  right artinian ring (e.g., $A/J$ is right Goldie). Then the weakly
  closed nil subsets of $A$ generate nilpotent subrings of $A$.

  (iii) Montgomery and Small apply Goldie's result to conclude that
  graded nil subrings of noetherian group graded rings must be
  nilpotent; see \cite[Corollary 1.2]{Mon-Sma}. Their key insight
  in this situation is that the set of homogeneous elements of a
  graded subring of a group graded ring is weakly
  closed. Consequently, their argument can be applied to show that if
  $A$ is a group graded ring with nilpotent prime radical $J$, and if
  $A/J$ embeds in a right artinian ring, then the graded nil subrings 
  of $A$ are nilpotent.

  (iv) Assuming that $A$ is a ring with right Krull dimension, then
  the prime radical $J$ of $A$ is nilpotent and every semiprime factor
  ring of $A$ is right Goldie; see for example \cite[Chapter
  6]{McC-Rob} for details.  Therefore, by (iii), if $A$ is a group
  graded ring with right Krull dimension then graded nil subrings of
  $A$ are nilpotent.

  (v) Assuming that $A$ is a finitely generated algebra, over a field
  $k$, satisfying a polynomial identity, then the prime radical of $A$
  is nilpotent and every semiprime factor of $A$ is a Goldie ring; see
  for example \cite[Chapter 13]{McC-Rob}. Therefore, again by (iii),
  if $A$ is a group graded ring satisfying a polynomial identity, then
  the graded nil subrings of $A$ are nilpotent.
\end{note}

We now present the main result. Recall $R$, $\grR$, and $\grRp$ from
(\ref{setup}).

\begin{thm} \label{main-result} If $R$ is algebraic over $k$ then the
  following conditions are equivalent: {\rm (i)} $R$ is finite
  dimensional. {\rm (ii)} $\grR$ is finite dimensional. {\rm (iii)}
  $\grRp$ is nilpotent. {\rm (iv)} $\grR$ is right noetherian. {\rm
    (v)} $\grR$ has right Krull dimension. {\rm (vi)} $\grR$ satisfies
  a polynomial identity. \end{thm}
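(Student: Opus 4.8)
The plan is to prove the cycle of implications $(i)\Rightarrow(ii)\Rightarrow(iii)\Rightarrow\cdots$, but most of these are cheap. We already know from (\ref{setup})(iii) that $(i)\Leftrightarrow(ii)\Leftrightarrow(iii)$, and $(iii)\Rightarrow(iv)$, $(iii)\Rightarrow(v)$, $(iii)\Rightarrow(vi)$ are immediate, since if $\grRp$ is nilpotent then $\grR$ is a finitely generated module over $V^0/V^{-1}=k$, hence finite-dimensional, hence trivially right noetherian, of Krull dimension $0$, and a PI-ring. So the entire content is in the three reverse implications $(iv)\Rightarrow(iii)$, $(v)\Rightarrow(iii)$, and $(vi)\Rightarrow(iii)$ — each of which says: if $\grR$ has the relevant finiteness property, then the graded-nil ideal $\grRp$ is actually nilpotent.

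**The common mechanism.**

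For each of these three implications I would invoke the corresponding part of the survey (\ref{survey}). Since $R$ is algebraic over $k$, remark (\ref{Reg-Smo}) tells us that $\grRp$ is \emph{graded nil} — every homogeneous element of $\grRp$ is nilpotent — for arbitrary $k$. Now $\grRp$ is a graded \emph{ideal}, hence a graded subring, of the $\Z$-graded ring $\grR$. By Montgomery–Small's observation, recorded in (\ref{survey})(iii), the set of homogeneous elements of a graded subring of a group graded ring is weakly closed, so $\grRp$ is a graded nil subring of a group graded ring. Then:
\begin{itemize}
\item If $\grR$ is right noetherian (case (iv)), then by Levitski's theorem its prime radical is nilpotent and by Goldie's theorem every semiprime factor is right Goldie; so by the general statement at the end of (\ref{survey})(iii), the graded nil subring $\grRp$ is nilpotent.
\item If $\grR$ has right Krull dimension (case (v)), apply (\ref{survey})(iv) directly: graded nil subrings of a group graded ring with right Krull dimension are nilpotent.
\item If $\grR$ satisfies a polynomial identity (case (vi)), then — crucially — $\grR$ is a \emph{finitely generated} $k$-algebra, since the generators $\widehat{v}$, $v\in V$, generate $\grR$ as a $k$-algebra; so (\ref{survey})(v) applies and $\grRp$ is nilpotent.
\end{itemize}

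**What to watch out for.**

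The routine-looking but genuinely necessary checks are: (a) that $\grR$ is generated as a $k$-algebra by the degree-one images $\widehat{v}$ of a $k$-basis of $V$, so that the hypotheses "finitely generated algebra" in (\ref{survey})(iii)(end) and (\ref{survey})(v) are met; and (b) that $\grRp$, being a graded ideal, qualifies as a "graded subring" in the sense of (\ref{survey})(iii) (it is a non-unital subring, but the Montgomery–Small weak-closedness argument and the Jacobson/Goldie nilpotence conclusions are insensitive to the presence of a unit). I expect the main — really the only — obstacle to be purely expository: assembling the quoted black boxes so that each of (iv), (v), (vi) feeds the right hypothesis into the right part of (\ref{survey}), and then closing the loop by noting that $\grRp$ nilpotent gives $\grR$ finite-dimensional gives $R$ finite-dimensional via (\ref{setup})(iii). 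There is no new hard analysis here; the theorem is a clean packaging of the graded-nil-implies-nilpotent principle once (\ref{Reg-Smo}) supplies graded-nilness from algebraicity.
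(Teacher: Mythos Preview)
Your proposal is correct and follows essentially the same route as the paper: the equivalence of (i)--(iii) and the implications (ii)$\Rightarrow$(iv),(v),(vi) are dismissed as easy, and each of (iv),(v),(vi)$\Rightarrow$(iii) is obtained by feeding the graded-nilness of $\grRp$ (from (\ref{Reg-Smo})) into the corresponding part of the survey (\ref{survey}). You spell out a couple of points the paper leaves implicit --- that $\grRp$ is graded nil and that $\grR$ is finitely generated over $k$ so that (\ref{survey})(v) applies in the PI case --- but the architecture is identical.
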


\begin{proof} To start, it is easy to check that (i), (ii), and (iii)
  are equivalent, as already noted in (\ref{setup}.iii). Next, it
  is immediate that (ii) implies (iv), (v), and (vi). 

  That (iv) implies (iii) follows from (\ref{survey}.iii), that (v)
  implies (iii) follows from (\ref{survey}.iv), and that (vi) implies
  (iii) follows from (\ref{survey}.iv).

  The result follows.
\end{proof}

\begin{note} Following \cite[Chapter 8, \S 3]{McC-Rob} (also see
  \cite{Kra-Len}), if $A$ is a finitely generated algebra, over a
  field $k$, of finite GK-dimension, and if GK-dimension is
  \emph{finitely partitive} for $A$, then $A$ has finite right Krull
  dimension. Consequently, the finitely generated algebraic algebra
  $R$ of (\ref{main-result}) is finite dimensional if and only if
  $\grR$ is finitely partitive for GK-dimension. In \cite{Bel}, Bell
  gives examples of finitely generated algebras of Gelfand-Kirillov
  dimension 2, over arbitrary fields, that are not finitely partitive
  for GK-dimension.
\end{note}



\begin{thebibliography}{99}
  
\bibitem{Bel} J. P. Bell, Examples in finite Gel'fand-Kirillov
  dimension II, \emph{Comm.~Algebra}, 33 (2005), 3323--3334.

\bibitem{Bel-Sma} J. P. Bell and L. W. Small, A question of
  {K}aplansky, Special issue in celebration of Claudio Procesi's 60th
              birthday, \emph{J. Algebra}, 258 (2002), 386--388.

\bibitem{Bel-Sma-Smo} J. P. Bell, L. W. Small, A. Smoktunowicz,
  Primitive algebraic algebras of polynomially bounded growth, in
  \emph{New trends in noncommutative algebra}, Contemp.~Math.~562
  (Amer.~Math.~Soc., Providence, 2012), 41--52.

\bibitem{Bro-Goo} K. A. Brown and K. R. Goodearl, \emph{Lectures on 
    algebraic quantum groups}, Advanced Courses in Mathematics, CRM 
  Barcelona (Birkh\"auser Verlag, Basel, 2002). 

\bibitem{Gol} A. W. Goldie, Semi-prime rings with maximum condition,
  \emph{Proc.~London Math.~Soc.~(3)}, 10 (1960), 201--220.

\bibitem{Gol-Saf} E. S. Golod and I. R. \v Safarevi\v c, I. R., On the
  class field tower, \emph{Izv.~Akad.~Nauk SSSR Ser.~Mat.}, 28 (1964),
  261--272.

\bibitem{Jac} N. Jacobson, Une g\'en\'eralisation du th\'eor\`eme
  d'{E}ngel, \emph{C. R. Acad.~Sci.~Paris}, 234 (1952), 579--581.

\bibitem{Kra-Len} G\"unter R. Krause and T. H. Lenagan, \emph{Growth
    of algebras and {G}elfand-{K}irillov dimension}, Graduate Studies
  in Mathematics 22, Revised Edition (American Mathematical Society,
  Providence, RI, 2000).

\bibitem{Len-Smo} T. H. Lenagan and A. Smoktunowicz, An infinite
  dimensional affine nil algebra with finite {G}elfand-{K}irillov
  dimension, \emph{J. Amer.~Math.~Soc.}, 20 (2007), 989--1001.

\bibitem{Len-Smo-You} T. H. Lenagan, A. Smoktunowicz, and A. Young,
  Nil algebras with restricted growth,
  \emph{Proc.~Edinb.~Math.~Soc.~(2)}, 55 (2012), 461--475.

\bibitem{McC-Rob} J. C. McConnell and J. C. Robson, \emph{Noncommutative
    Noetherian Rings}, Graduate Studies in Mathematics 30 (American
  Mathematical Society, Providence, 2000).

\bibitem{Mon-Sma} S. Montgomery and L. W. Small, Nil subsets of graded
  algebras, \emph{Proc.~Amer.~Math.~Soc.}, 126 (1998), 653--656.

\bibitem{Nas-VOy} C. N\u ast\u asescu and F. van Oystaeyen,
  \emph{Graded ring theory}, North-Holland Mathematical Library
28 (North-Holland Publishing Co., Amsterdam-New York, 1982).

\bibitem{Reg} A. Regev, Filtered algebraic algebras,
  \emph{Proc.~Amer.~Math.~Soc.}, 138, (2010), 1941--1947.

\bibitem{Smo1} A. Smoktunowicz, A simple nil ring exists,
  \emph{Comm.~Algebra}, 30 (2002), 27--59.

\bibitem{Smo2} \bysame, Some results in noncommutative ring
  theory, in \emph{International {C}ongress of {M}athematicians.~{V}ol.~{II}},
(Eur.~Math.~Soc., Z\"urich, 2006), 259--269.

\bibitem{Smo3} \bysame, Graded algebras associated to algebraic
  algebras need not be algebraic, in \emph{European {C}ongress of
    {M}athematics} (Eur.~Math.~Soc., Z\"urich, 2010), 441--449.\

\bibitem{Zel} E. Zelmanov, Some open problems in the theory of infinite dimensional
              algebras, \emph{J. Korean Math.~Soc.}, 44 (2007), 1185--1195.



\end{thebibliography}
\end{document}